\documentclass[11pt,a4paper]{article}

\usepackage{amsmath}
\usepackage{amsfonts}
\usepackage{amssymb,color}

%Package rajout
\usepackage{amsthm}
\usepackage{graphicx}
\usepackage[T1]{fontenc}     
\usepackage{color,graphicx}
\usepackage{epstopdf}
\usepackage{graphics}
\usepackage{psfrag}
\usepackage{url}
\usepackage{hyperref}
\usepackage{subcaption}
\theoremstyle{plain}

\newtheorem{theorem}{Theorem}

\newtheorem{defi}{Definition}
\newtheorem{lemma}{Lemma}
\newtheorem{notation}{Notation}
\newtheorem{prop}{Proposition}
\newtheorem{remark}{Remark}
\newtheorem{assumption}{Assumption}
%\numberwithin{equation}{section}

\textheight240mm \voffset-23mm \textwidth165mm \hoffset-15mm
\oddsidemargin15mm \evensidemargin15mm %\baselineskip10cm

\AtBeginDocument{

}

\title{On Strichartz estimates for a dispersion modulated by a time-dependent deterministic noise}

\author{Romain Duboscq \footnotemark[1]}

\begin{document}
\maketitle

\begin{center}
INSA de Toulouse \footnotemark[2] \\
IMT UMR CNRS 5219 \\
Universit\'e de Toulouse 
\end{center}

\footnotetext[1]{\texttt{romain.duboscq@insa-toulouse.fr}}
\footnotetext[2]{135 avenue de Rangueil 31077 Toulouse Cedex 4 France}

\abstract{\noindent 
We address the Cauchy problem for a nonlinear Schrödinger equation where the dispersion is modulated by a deterministic noise. The noise is understood as the derivative of a self-affine function of order $H\in (0,1)$. Due to the self-similarity of the noise, we obtain modified Strichartz estimates which enables us to prove the global well-posedness of the equation for $L^2$-supercritical nonlinearities. This is an occurence of regularization by noise in a purely deterministic context.}

\vspace{1em}
{\noindent \textbf{Key words:} Nonlinear Schrödinger equation; Strichartz estimate; Hardy-Littlewood-Sobolev inequality; Regularization by noise.
}

\vspace{1em}
\noindent
{\noindent \textbf{AMS 2010 subject classification:} Primary: 35Q55; Secondary: 60H15.}

\renewcommand{\thefootnote}{\fnsymbol{footnote}}

\allowdisplaybreaks

\section{Introduction}

In this paper, we wish to study the following type of nonlinear Schrödinger equation
\begin{equation}\label{eq:Main}\left\{\begin{array}{ll}
i\partial_t \psi(t,x) = \Delta \psi(t,x) \dot{X}_t + \lambda |\psi|^{2\sigma}\psi(t,x),\quad (t,x)\in [0,1]\times\mathbb{R}^d,
\\ \psi(0,\cdot) = \psi_0 \in L^2(\mathbb{R}^d),
\end{array}\right.
\end{equation}
where $\sigma \in \mathbb{R}^+$, $\lambda\in\mathbb{R}$,  $X$ is a deterministic continuous function.

This type of nonlinear Schrödinger equation modulated by a time-dependent function has been introduced in \cite{agrawal2001applications}, with $d=1$, to model the electric field of a light pulse travelling in an optical fiber with dispersion management. In a standard optical fiber, the electric field of a light pulse can be described as a soliton whose evolution is governed by a nonlinear Schrödinger equation (\textit{i.e.} Equation \eqref{eq:Main} with $\dot{X} = 1$). When propagating in the fiber, due to the dispersion, the soliton spreads and becomes difficult to detect since its amplitude decreases. This is a major issue when one wants to use optical fibers as communication devices. Since it is impossible to build fibers without dispersion, one way to avoid this problem is to engineer optical fibers with a dispersion varying rapidly around zero: these are called dispersion managed optical fibers.

By considering a random dispersion management, Marty \cite{marty2006splitting} derived a nonlinear Schrödinger equation with white noise dispersion, that is Equation \eqref{eq:Main} with $\dot{X} = \dot{W}$ where $W$ is a Wiener process. In \cite{de2010nonlinear}, de Bouard and Debussche proved that such equations are well-posed when the nonlinearity is $L^2$-subcritical, \textit{i.e.} $\sigma < 2/d$. Subsequently, Debussche and Tsutsumi improved this result to the $L^2$-critical case $\sigma = 2/d$ in \cite{debussche20111d}. Then, in \cite{belaouar2015numerical}, Belaouar, de Bouard and Debussche conducted numerical experiments  and conjectured that the critical nonlinear parameter should be $\sigma = 4/d$. In \cite{chouk2015nonlinear}, Chouk and Gubinelli studied a nonlinear Schrödinger equation modulated by a noise understood as the derivative of a $(\rho,\gamma)$-irregular function and solved the Cauchy problem in the $L^2$-critical case. Let us mention that, up to now, the only examples of $(\rho,\gamma)$-irregular functions are fractional Wiener processes (see \cite{CATELLIER20162323}). Finally, in \cite{duboscq2017stochastic}, the author and Réveillac showed that, in the context of the white noise dispersion, the equation is well-posed for $\sigma < 4/d$, that is for $L^2$-supercritical nonlinearities.

Since most of these result handle $L^2$-critical and supercritical nonlinearities, this indicates the strong stabilizing effect of dispersions modulated by a noise. This is reminiscent of the well-known regularization by noise effect (see \cite{flandoli2011random} for a survey) which is characterized by the improvement of the well-posedness of an evolution equation when introducing noise in it. This effect was originally discovered in the context of SDEs by Zvonkin in \cite{zvonkin1974transformation} where he was able, thanks to the Wiener process, to remove the singular drift from the equation and, thus, prove the Cauchy problem. This phenomenon was then generalized \cite{veretennikov1981strong,krylov2005strong,davie2007uniqueness} and also extended to the realm of SPDEs \cite{flandoli2010well,da2013strong,Gubinelli2013,gess2017long}. Let us remark that, to our knowledge, there is no explicit example of deterministic noise providing a regularization by noise effect.

Our main motivation to study Equation \eqref{eq:Main} is to prove that there can be a regularization effect by a deterministic noise for modulated nonlinear Schrödinger equations. Here, we investigate the case where $X$ is a self-affine function. This choice is motivated by the fractal property of these functions and the possibility of constructing explicit examples of them (see \cite{kamae1986characterization,bertoin1988mesure,okamoto2005remark}). Since $X$ is not differentiable, it is difficult in general to give a meaning to noise term $\dot{X}$ from Equation  \eqref{eq:Main}, whereas, in the Wiener setting, it is possible to employ Itô or Stratonovich's integration. Hence, as in  \cite{chouk2015nonlinear}, we rather consider the mild formulation of Equation \eqref{eq:Main}, which is given by
\begin{equation}\label{eq:MainMild}
\psi(t,x) = P_{0,t}\psi_0(x) - i\lambda\int_0^t P_{s,t}|\psi|^{2\sigma}\psi(s,x) ds,\quad\forall (t,x)\in[0,1]\times\mathbb{R}^d,
\end{equation}
with $(P_{s,t})_{0\leq s\leq t\leq 1}$ the propagator associated to the linear operator of \eqref{eq:Main}. That is, we have, $ \forall s,t\in [0,1]$, with $s\leq t$, $\forall \varphi\in\mathcal{C}^{\infty}_0(\mathbb{R}^d)$ and $\forall x\in\mathbb{R}^d$,
\begin{equation}\label{eq:propagator}
P_{s,t}\varphi(x) : = \mathcal{F}^{-1}\left( e^{i|\xi|^2(X_t-X_s)}\hat{\varphi}(\xi) \right)(x) = \frac{1}{(4\pi(X_t-X_s))^{d/2}}\int_{\mathbb{R}^d} e^{i\frac{|x-y|^2}{4(X_t-X_s)}}\varphi(y)dy,
\end{equation}
where we denote $\mathcal{F}^{-1}$ the inverse Fourier transform and $\hat{\varphi}$ the Fourier transform of $\varphi$.

In order to solve the Cauchy problem of Equation \eqref{eq:MainMild}, we investigate Strichartz estimates to apply a fixed-point argument. This is the classical strategy for this type of nonlinear dispersive equations \cite{cazenave2003semilinear}. Our argument somehow follow the one from \cite{duboscq2017stochastic} in the sense that we start by deriving a modified Hardy-Littlewood-Sobolev inequality adapted to our situation. The regularization effect will take its roots in this new inequality and mainly relies on the scaling invariance of self-affine functions. From there, the Strichartz estimates are directly obtained by the usual $TT^*$ method \cite{keel1998endpoint}.

The rest of the paper is organized as follows: in Section \ref{sec:SelfMain} we introduce the class of self-affine functions that we consider and describe our main results, in Section \ref{sec:Strichartz} we derive the Strichartz estimates associated to modulated dispersion and finally, in Section \ref{sec:Cauchy}, we solve the Cauchy problem associated to Equation \eqref{eq:MainMild}.

\section{Self-affine functions and main results}\label{sec:SelfMain}

Let us start by recalling the definition of self-affine functions. Here, we follow the definition given by Kamae in \cite{kamae1986characterization}. 

\begin{defi}
Let $(X_t)_{0\leq t\leq 1}$ be a continuous real-valued function such that $X_0 = 0$ and $X \neq 0$, and $b\in\mathbb{N}$ such that $b\geq 2$. We say that $X$ is a self-affine function of order $H\in (0,1]$ and base $b$ if there exists a finite set of real-valued functions $\mathcal{Y}^{X} = \left\{(Y_t^{(m)})_{0\leq t\leq 1}\right\}_{1\leq m\leq N}$, $N\in\mathbb{N}$, such that
\begin{enumerate}
\item for any $n\in\mathbb{N}$ and $j\in\{0,\cdots,b^n -1\}$, there exists $Y\in\mathcal{Y}^{X}$ such that
\begin{equation}\label{eq:selfaffine}
X_{\frac{j+t}{b^n}} - X_{\frac{j}{b^n}} = b^{-Hn} Y_t,\quad \forall t\in[0,1],
\end{equation}
\item for any $m\in\mathbb{N}$, $k\in\{0,\cdots,b^m-1\}$ and $Y\in\mathcal{Y}^{X}$, there exists integers $n$ and $i$ with $m\leq n$ and 
\begin{equation*}
kb^{n-m}\leq i<(k+1)b^{n-m},
\end{equation*}
such that \eqref{eq:selfaffine} holds.
\end{enumerate}
\end{defi}

Throughout this paper, we will consider a specific subset of self-affine functions that satisfy the following assumption.
\begin{assumption}\label{asm:sa}
Let $X$ be a self-affine function. There exists a constant $C>0$ such that
\begin{equation}\label{eq:selfaffineAsm}
\min_{n\in\mathbb{N},j\in\{0,\cdots,b^n-1\}} b^{Hn}|X_{\frac{j+t}{b^n}} - X_{\frac{j}{b^n}} | = C^{-1}.
\end{equation}
\end{assumption}

\begin{remark}This assumption is required to prove Theorem \ref{thm:mHLS} below since it prevent singularities in the study of the discretized inequality.
\end{remark}
\begin{notation} We denote $\mathcal{A}$ the set of self-affine functions that satisfy Assumption \ref{asm:sa}.
\end{notation}

 In order to prove that $\mathcal{A}$ is not empty, we provide below a class of functions that belongs in $\mathcal{A}$ and that were introduced in \cite{bertoin1988mesure}. Let $a,b\in\mathbb{N}$ such that $2\leq a<b$ and $M,D\in\mathcal{C}([0,1])$ two functions from $[0,1]$ to itself such that
\begin{enumerate}
\item $M(0) = D(1) = 0$ and $M(1) = D(0) = 1$,
\item $M$ and $D$ are affine functions on each interval $[jb^{-1},(j+1)b^{-1}]$ with $j\in\{0,\cdots,a-1\}$,
\item $|M((j+1)b^{-1}) - M(jb^{-1})| = |D((j+1)b^{-1}) - D(jb^{-1})| = a^{-1}$.
\end{enumerate}
We now consider the sequence $(X^{(n)})_{n\geq 0}\subset\mathcal{C}([0,1])$ of functions constructed by following the procedure
\begin{enumerate}
\item $X^{(1)} = M$,
\item for any $n\geq 1$, for any $j\in\{0,\cdots,b^n-1\}$ and any $t\in [jb^{-n},(j+1)b^{-n}]$, we define 
\begin{equation*}
X^{(n+1)}_{(j+t)b^{-n}} : = X^{(n)}_{jb^{-n}} + a^{-n}M(t),
\end{equation*}
if $X^{(n)}$ is increasing on $[jb^{-n},(j+1)b^{-n}]$ and
\begin{equation*}
X^{(n+1)}_{(j+t)b^{-n}} : = X^{(n)}_{jb^{-n}} - a^{-n}(1 - D(t)),
\end{equation*}
if $X^{(n)}$ is decreasing on $[jb^{-n},(j+1)b^{-n}]$.
\end{enumerate}
We have the following result concerning the limiting function constructed this way.
\begin{theorem}(see \cite{bertoin1988mesure})
The sequence $(X^{(n)})_{n\geq 0}$ converges in $\mathcal{C}([0,1])$ to a function $X$ which is self-affine of order $H = \log(b)/\log(a)$ and base $b$.
\end{theorem}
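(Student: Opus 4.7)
The plan is to prove uniform convergence of $(X^{(n)})_{n\geq 1}$ in $\mathcal{C}([0,1])$ first, and then to identify the self-similar structure of the limit inductively to verify the two conditions in the definition of a self-affine function. For each $n$, the function $X^{(n)}$ is piecewise affine on the subintervals $I^n_j := [jb^{-n},(j+1)b^{-n}]$, with vertical variation $\pm a^{-n}$ on each piece. The passage from $X^{(n)}$ to $X^{(n+1)}$ only modifies the affine interpolant on each $I^n_j$, replacing it by either $X^{(n)}_{jb^{-n}} + a^{-n} M(\cdot)$ or $X^{(n)}_{jb^{-n}} - a^{-n}(1-D(\cdot))$. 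Since $M$ and $D$ take values in $[0,1]$ and agree with the linear interpolant at the two endpoints of $I^n_j$, this forces $\|X^{(n+1)} - X^{(n)}\|_\infty \leq C_0\, a^{-n}$ for some $C_0$ depending only on $M$ and $D$; because $a \geq 2$, the telescoping sum $\sum_n (X^{(n+1)} - X^{(n)})$ converges uniformly, yielding a limit $X \in \mathcal{C}([0,1])$.

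To verify condition (1) of the self-affine definition, I would take $\mathcal{Y}^X := \{X^+, X^-\}$ with $X^+ := X$ and $X^-$ the limit of the same construction started from $X^{(1)} := -(1-D)$ in place of $M$. A straightforward induction on $k$ shows, for every $n, j, k$ and $t\in[0,1]$,
\begin{equation*}
X^{(n+k)}_{(j+t)b^{-n}} - X^{(n)}_{jb^{-n}} = a^{-n}\, Z^{(k)}_t,
\end{equation*}
where $Z^{(k)}$ is the $k$-th iterate of the recursive procedure initialized at $M$ if $X^{(n)}$ is ascending on $I^n_j$, and at $-(1-D)$ if it is descending. Letting $k\to\infty$ and using the relation $a = b^H$ produces $X_{(j+t)/b^n} - X_{j/b^n} = b^{-Hn}\, Y_t$ for some $Y\in\mathcal{Y}^X$, which is precisely condition (1). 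For condition (2), I would use that both $M$ and $D$ contain at least one ascending and at least one descending affine piece: this follows from the boundary values $M(0)=0,M(1)=1,D(0)=1,D(1)=0$ combined with the $\pm 1/a$ increment constraint and the strict inequality $a<b$. Iterating the construction inside any prescribed $I^m_k$ therefore produces dyadic subintervals of both monotonicity types at arbitrarily fine scales, and selecting $n\geq m$ and $i\in[kb^{n-m},(k+1)b^{n-m})$ so that the monotonicity of $X^{(n)}$ on $I^n_i$ matches the one prescribed by the choice of $Y\in\mathcal{Y}^X$ yields the required identity via condition (1).

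I anticipate the main technical obstacle to be the careful bookkeeping of monotonicity patterns inside the iterative construction, specifically guaranteeing the simultaneous presence of \emph{both} monotonicity types inside every dyadic subinterval at arbitrarily fine scales. This is most delicate when $b-a$ is small, so that the minority monotonicity pattern is sparse in $M$ and $D$; the argument should nevertheless go through, since every ascending piece of $X^{(n)}$ eventually subdivides at deeper levels to reveal descending sub-pieces coming from the descending pieces of $M$, and symmetrically for descending pieces inheriting ascending sub-pieces from $D$.
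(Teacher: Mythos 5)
The paper itself gives no proof of this statement: it is imported verbatim from Bertoin \cite{bertoin1988mesure}, so there is no internal argument to compare against, and your proposal has to stand as a self-contained proof --- which it essentially does. The convergence step is right: $X^{(n+1)}$ and $X^{(n)}$ agree at the level-$n$ grid points and both stay inside the band of height $a^{-n}$ over each interval $[jb^{-n},(j+1)b^{-n}]$ (because $M,D$ take values in $[0,1]$ and the increment of $X^{(n)}$ over such an interval is $\pm a^{-n}$), giving $\|X^{(n+1)}-X^{(n)}\|_{\infty}\le a^{-n}$ and geometric convergence. The two-generator identification $\mathcal{Y}^X=\{X^+,X^-\}$ with the induction $X^{(n+k)}_{(j+t)b^{-n}}-X^{(n)}_{jb^{-n}}=a^{-n}Z^{(k)}_t$ is the natural route to condition (1); the only bookkeeping needed to close that induction is the (immediate) observation that the identity itself, being an affine change with positive factor $a^{-n}$, transfers the monotonicity pattern of $Z^{(k)}$ on each sub-interval to $X^{(n+k)}$, so the correct generator ($M$ versus $-(1-D)$) is selected at the next level.

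Two remarks. First, your argument produces the order $H$ determined by $b^{H}=a$, i.e.\ $H=\log a/\log b\in(0,1)$, which is what the increments $a^{-n}$ on intervals of length $b^{-n}$ force, and which is consistent with the requirement $H\in(0,1]$ in the paper's definition (recall $a<b$); the theorem as printed says $H=\log b/\log a$, the reciprocal, which should be read as a typo --- flag it rather than silently switching conventions. Second, the difficulty you anticipate for condition (2) does not actually arise: if $u,d$ denote the numbers of ascending and descending affine pieces of $M$, then $u+d=b$ and $u-d=a$ (so existence of $M$ already forces $b-a$ even), hence $d=(b-a)/2\ge 1$ and $u\ge 1$, and symmetrically $D$ has $(b-a)/2\ge 1$ ascending and $(b+a)/2$ descending pieces. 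Consequently every level-$m$ grid interval already contains, at level $m+1$, sub-intervals of both monotonicity types, so a single refinement step (take $n=m+1$) yields condition (2); no multi-scale tracking of sparse minority patterns is needed, even when $b-a=2$.
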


We can see that, for any $n\in\mathbb{N}^*$, $X^{(n)}$ is a linear interpolation of $X$ and that the assumption \eqref{eq:selfaffineAsm} is satisfied since, for any $j\in\{0,\cdots,b^n -1\}$,
\begin{equation*}
a^{n}|X_{(j+1)b^{-n}} - X_{jb^{-n}}| = 1.
\end{equation*}

\begin{remark}
In order to illustrate this construction, we present in Figure \ref{fig:selfaffine} two examples of such functions.
\end{remark}
\begin{figure}
\centering
\begin{subfigure}[b]{0.45\textwidth}
\includegraphics[scale=0.4]{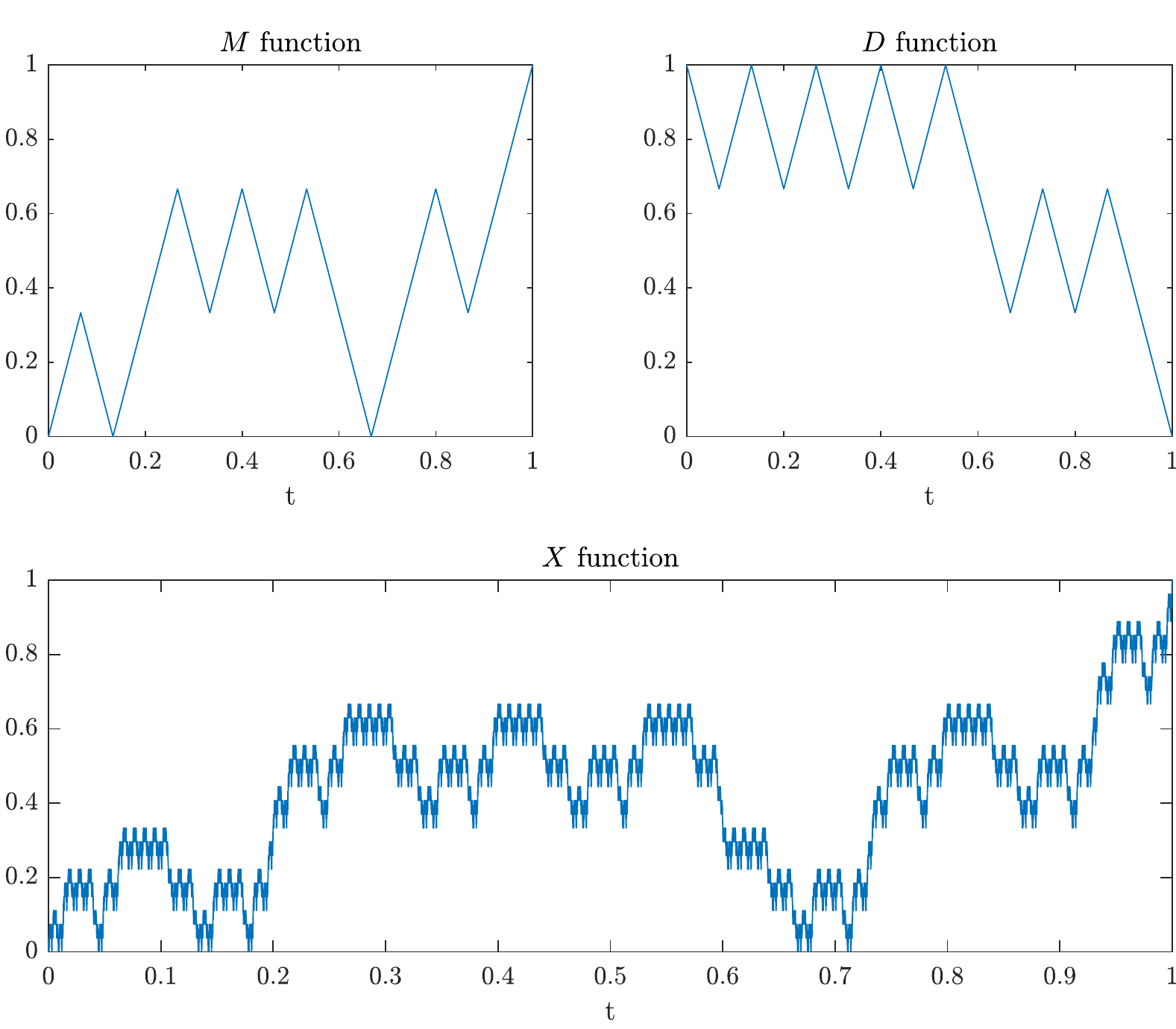}
\caption{$a = 15, b = 3$ and $H \simeq 0.4 $.}
\end{subfigure}
\begin{subfigure}[b]{0.45\textwidth}
\includegraphics[scale=0.4]{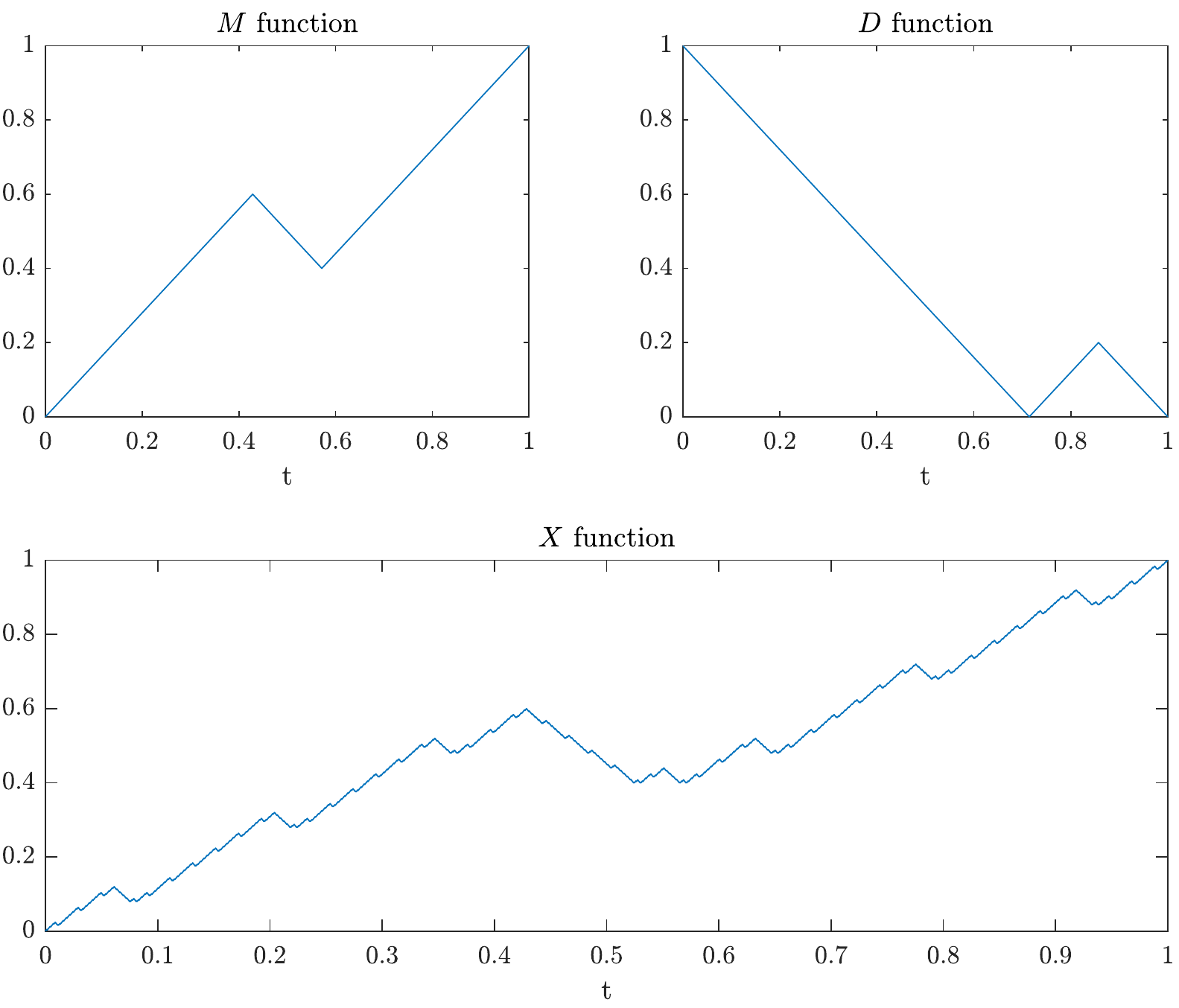}
\caption{$a = 7, b = 5$ and $H \simeq 0.8$.}
\end{subfigure}
\caption{Two constructions of a self-affine function following the procedure in \cite{bertoin1988mesure}.\label{fig:selfaffine}}
\end{figure}

Let us now introduce the following definition which is a modification of the standard admissible pairs for Strichartz estimates.
\begin{defi}
For any $(q,p)\in (2,\infty)^2$ and $H\in(0,1)$, we say that $(q,p)$ is $H$-admissible if
\begin{equation*}
\frac{2}{q} = dH\left(\frac 1 2 - \frac 1 p \right).
\end{equation*}
\end{defi}
We can now state our first main result on the dispersive properties of the propagator $P$ defined in \eqref{eq:propagator}.
\begin{theorem}\label{thm:Strichartz}
Let $T\in (0,1]$, $X\in\mathcal{A}$ of order $H\in(0,1)$ and $(q,p)$ $H$-admissible. Then, there exists two constants $C_1,C_2>0$ which depends on $d$, $p$ and $q$ such that, $\forall f\in L^2(\mathbb{R}^d)$ and $\forall g\in L^{r'}([0,T];L^{l'}(\mathbb{R}^d))$, the following inequalities holds 
\begin{align}
\left\|P_{0,\cdot} f \right\|_{L^q([0,T];L^p(\mathbb{R}^d))}&\leq  C_1\|f\|_{L^2}\label{eq:Stri1},
\\ \left\|\int_0^T P_{s,\cdot} g(s)ds \right\|_{L^q([0,T];L^p(\mathbb{R}^d))} &\leq C_2 \|g\|_{L^{r'}([0,T];L^{l'}(\mathbb{R}^d))},\label{eq:Stri2}
\end{align}
for any $(r,l)$ $H$-admissible.
\end{theorem}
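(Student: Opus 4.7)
The plan is to follow the $TT^*$ strategy of \cite{keel1998endpoint}, with the usual Hardy-Littlewood-Sobolev inequality replaced by its modified analogue (Theorem \ref{thm:mHLS}), tailored to the fractal time-change induced by $X$.

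First I would record the basic dispersive bound. Since the Fourier multiplier defining $P_{s,t}$ has modulus one, Plancherel gives $\|P_{s,t}\varphi\|_{L^2}=\|\varphi\|_{L^2}$, while reading off the size of the convolution kernel in \eqref{eq:propagator} yields $\|P_{s,t}\varphi\|_{L^\infty}\leq (4\pi|X_t-X_s|)^{-d/2}\|\varphi\|_{L^1}$. Riesz--Thorin interpolation then produces, for every $p\in[2,\infty]$,
\[
\|P_{s,t}\varphi\|_{L^p(\mathbb{R}^d)} \leq C_p\,|X_t-X_s|^{-d(1/2-1/p)}\|\varphi\|_{L^{p'}(\mathbb{R}^d)}.
\]

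Next, setting $Tf:=P_{0,\cdot}f$, the identity $P_{0,t}(P_{0,s})^{*}=P_{s,t}$ --- clear on the Fourier side since $X_0=0$ --- implies $TT^{*}g=\int_0^T P_{s,\cdot}g(s)\,ds$. By standard duality, \eqref{eq:Stri1} is equivalent to $\|TT^{*}g\|_{L^q_t L^p_x}\leq C\|g\|_{L^{q'}_t L^{p'}_x}$, and Minkowski's inequality in $x$ combined with the dispersive bound above reduces this to the scalar-in-time inequality
\[
\left\|\int_0^T \frac{f(s)}{|X_t-X_s|^{d(1/2-1/p)}}\,ds\right\|_{L^q([0,T])} \leq C\|f\|_{L^{q'}([0,T])}.
\]
Writing $\alpha:=d(1/2-1/p)$, the $H$-admissibility relation $2/q=dH(1/2-1/p)$ reads $\alpha H=2/q$, which is precisely the scaling assumption underpinning Theorem \ref{thm:mHLS}; applying it closes \eqref{eq:Stri1}. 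The asymmetric bound \eqref{eq:Stri2} for possibly distinct admissible pairs $(q,p)$ and $(r,l)$ follows along the same lines by testing against $h\in L^{q'}_t L^{p'}_x$, writing the resulting bilinear form
\[
\int_0^T\!\!\int_0^T \langle P_{s,t}g(s), h(t)\rangle_{L^2_x}\,ds\,dt,
\]
bounding the $x$-integral by Hölder with the dispersive estimate, and invoking the mixed-exponent version of Theorem \ref{thm:mHLS} in $t$.

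The main obstacle does not lie in this chain of reductions but earlier in the paper: the regularization effect is encoded entirely in Theorem \ref{thm:mHLS}, where the self-affine scaling \eqref{eq:selfaffine} and Assumption \ref{asm:sa} have to be exploited to tame the singular kernel $|X_t-X_s|^{-\alpha}$ --- in particular, Assumption \ref{asm:sa} rules out points where $X$ becomes stationary and would make $|X_t-X_s|^{-\alpha}$ uncontrollable. Once that modified HLS is in hand, the Strichartz bounds stated here are a routine output of the $TT^{*}$ machinery and require no further input beyond the dispersive and $L^2$-isometry properties of $P_{s,t}$.
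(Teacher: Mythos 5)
Your treatment of \eqref{eq:Stri1} and of the retarded estimate in the diagonal case (both pairs equal) is essentially the paper's argument: Plancherel gives the $L^2$ isometry, the kernel in \eqref{eq:propagator} gives the $L^1\to L^\infty$ bound, Riesz--Thorin yields the fixed-time dispersive estimate between dual exponents, and the $TT^*$ bilinear (equivalently, operator) form of Theorem \ref{thm:mHLS} with both time exponents equal to $q'$ closes the estimate, the $H$-admissibility relation being exactly the scaling condition $2-\alpha H=2/q'$ with $\alpha=d(1/2-1/p)$.

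The gap is in your last step, the proof of \eqref{eq:Stri2} for two \emph{distinct} $H$-admissible pairs $(q,p)\neq(r,l)$. In the bilinear form $\int_0^T\int_0^T\langle P_{s,t}g(s),h(t)\rangle_{L^2_x}\,ds\,dt$ you have $g(s)\in L^{l'}(\mathbb{R}^d)$ and $h(t)\in L^{p'}(\mathbb{R}^d)$ with $l\neq p$ in general, while the fixed-time dispersive estimate produced by Riesz--Thorin only maps $L^{m'}\to L^{m}$ between \emph{dual} spatial exponents. H\"older in $x$ would therefore require either $\|P_{s,t}g(s)\|_{L^{p}_x}$ with $g(s)\in L^{p'}_x$, or $\|P_{s,t}g(s)\|_{L^{l}_x}$ paired against $h(t)\in L^{l'}_x$; neither is available when $l\neq p$. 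The ``mixed-exponent'' flexibility of Theorem \ref{thm:mHLS} concerns only the two \emph{time} exponents, whereas the kernel exponent $\alpha=d(1/2-1/p)$ is tied to a single spatial exponent, so the direct bilinear argument only produces off-diagonal estimates sharing the same spatial index (and these are then not both admissible). The paper closes this case differently: it first proves $\left\|\int_0^T P_{s,\cdot}f(s)\,ds\right\|_{L^q([0,T];L^p)}\leq\|f\|_{L^1([0,T];L^2)}$ by Minkowski's inequality, the identity $P_{s,t}=P_{0,t}P_{0,s}^*$ and the homogeneous estimate \eqref{eq:Stri1}, and then interpolates this with the diagonal retarded estimate \eqref{eq:Stri2b} to reach general $(r,l)$. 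You need this interpolation step (or the full Keel--Tao bilinear interpolation machinery) to cover $l\neq p$; as written, your sketch of \eqref{eq:Stri2} does not go through.
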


Thanks to the dispersive estimates of Theorem \ref{thm:Strichartz} and by a standard fixed point argument, we can solve the Cauchy problem for Equation \eqref{eq:MainMild}. Thus, we have the next theorem.
\begin{theorem}\label{thm:Cauchy}
Let $X\in\mathcal{A}$ of order $H\in(0,1)$, $\psi_0\in L^2(\mathbb{R}^d)$ and $\sigma < \frac{2}{dH}$. There exists a unique solution $\psi \in L^{r}([0,1];L^{2\sigma + 2}(\mathbb{R}^d))$ to Equation \eqref{eq:MainMild} where $r$ is such that $(r,2\sigma + 2)$ is $H$-admissible.
\end{theorem}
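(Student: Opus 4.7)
The plan is to run a Banach fixed-point argument on the mild formulation \eqref{eq:MainMild} in the Strichartz space $E_T := L^r([0,T]; L^{2\sigma+2}(\mathbb{R}^d))$, writing $p = 2\sigma+2$ and observing that the $H$-admissibility of $(r,p)$ combined with the hypothesis $\sigma < 2/(dH)$ is equivalent to the strict inequality $r > 2\sigma+2$. This gap is the decisive ingredient: it will yield an extra power $T^\theta$ with $\theta>0$ in the nonlinear estimate, allowing both stability and contraction on a closed ball $B_R \subset E_T$ for $T$ small enough. Concretely, I would study the map
\begin{equation*}
\Phi(\psi)(t) = P_{0,t}\psi_0 - i\lambda\int_0^t P_{s,t}|\psi|^{2\sigma}\psi(s)\,ds
\end{equation*}
on $B_R = \{\psi\in E_T : \|\psi\|_{E_T}\leq R\}$ with $R := 2C_1\|\psi_0\|_{L^2}$.

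The two bounds needed to close the argument come directly from Theorem \ref{thm:Strichartz}: the linear term is controlled by $C_1\|\psi_0\|_{L^2}$ via \eqref{eq:Stri1}, and the Duhamel integral is controlled via \eqref{eq:Stri2}, applied with the admissible pair $(r,p)$ on the right-hand side, by $C_2\||\psi|^{2\sigma}\psi\|_{L^{r'}([0,T];L^{p'})}$. The pointwise identity $\||\psi|^{2\sigma}\psi\|_{L^{p'}_x} = \|\psi\|_{L^p_x}^{2\sigma+1}$ combined with H\"older's inequality in time gives
\begin{equation*}
\||\psi|^{2\sigma}\psi\|_{L^{r'}([0,T];L^{p'})} \leq T^{\theta}\,\|\psi\|_{E_T}^{2\sigma+1},\qquad \theta = \frac{r-(2\sigma+2)}{r} > 0.
\end{equation*}
Plugging in $\psi\in B_R$ produces the stability estimate. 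The pointwise bound $||u|^{2\sigma}u - |v|^{2\sigma}v|\leq C(|u|^{2\sigma}+|v|^{2\sigma})|u-v|$, together with the same chain of H\"older and Strichartz inequalities, yields the corresponding contraction estimate, so picking $T$ small enough that $C_2T^\theta R^{2\sigma}$ is below some absolute constant produces a unique fixed point in $B_R$. Unconditional uniqueness in $E_T$ then follows by a routine slicing argument over subintervals short enough that each restriction lies in the associated small ball.

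The main obstacle is the passage from this local-in-time solution to a global solution on $[0,1]$. Since the local time $T$ depends only on $\|\psi_0\|_{L^2}$, iteration suffices as soon as one has a uniform-in-time control of the mass along the flow. I would exploit that every $P_{s,t}$ is a Fourier multiplier of unit modulus, hence a unitary operator on $L^2$, together with the dual endpoint inhomogeneous estimate produced by the $TT^*$ method underlying Theorem \ref{thm:Strichartz}, in order to obtain $\psi\in C([0,T]; L^2)$. Approximating $\psi_0$ by Schwartz data, for which classical solutions and conservation of the $L^2$ norm are available, and passing to the limit in the mild formulation using the Strichartz bounds, would give $\|\psi(t)\|_{L^2} = \|\psi_0\|_{L^2}$ for every $t\in[0,T]$; iteration of the local result then covers $[0,1]$ in finitely many steps.
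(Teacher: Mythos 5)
Your proposal follows essentially the same route as the paper: a Banach fixed-point argument for the Duhamel map on a ball in $L^{r}([0,T];L^{2\sigma+2}(\mathbb{R}^d))$, using Theorem \ref{thm:Strichartz} with the $H$-admissible pair $(r,2\sigma+2)$, H\"older in time to produce the factor $T^{1-(2\sigma+2)/r}$ whose exponent is positive exactly because $\sigma<2/(dH)$, and the radius $R=2C_1\|\psi_0\|_{L^2}$. The only difference is that you additionally sketch the extension to $[0,1]$ via $L^2$-conservation and iteration, and unconditional uniqueness by a slicing argument; these standard steps are left implicit in the paper and do not alter the approach.
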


We can see that the order of the self-affine function $X$ directly affects the bound on the exponent of the nonlinearity. More precisely, since the order of $H$ is representative of the regularity of $X$, the more $X$ is irregular the bigger is the critical exponent of the nonlinearity. 

\begin{remark} We remark that this type of result is similar, in a sense, to \cite[Theorem 1.9]{CATELLIER20162323}, where Catellier and Gubinelli prove that, for a SDE driven by a fractional Brownian motion of Hurst parameter $H$, there exists a solution if the drift belongs to $\mathcal{C}^{\alpha}$ with $\alpha>1 - 1/2H$. Thus, a rougher fractional Brownian motion gives a stronger regularization effect.
\end{remark}

We finally note that it would be interesting to investigate ODE driven by self-affine functions and, in a larger sense, to look out for an explicit examples of regularization by noise in ODE.

\section{Strichartz estimates for the modulated dispersion}\label{sec:Strichartz}

\subsection{A Hardy-Littlewood-Sobolev inequality}
Our first step toward the proof of Theorem \ref{thm:Strichartz} is to deduce the following modified Hardy-Littlewood-Sobolev inequality (see \cite{hardy1928some,hardy1932some,sobolev1938theorem} for the classical Hardy-Littlewood-Sobolev inequality).

\begin{theorem}\label{thm:mHLS}
Let $X\in\mathcal{A}$ of order $H\in (0,1]$, $\alpha\in (0,1)$ and $f\in L^p([0,1])$ and $g\in L^q([0,1])$ such that $p,q\in(1,\infty)$ and
\begin{equation*}
2-\alpha H = \frac 1 p + \frac 1 q.
\end{equation*}
Then, there exists a constant $C>0$ which depends on $p$ and $q$ such that the following inequality holds
\begin{equation}\label{eq:cHLS}
\left|\int_0^1\int_0^1 f(t)|X_t-X_s|^{-\alpha} g(s) ds dt \right| \leq C \|f\|_{L^p([0,T])} \|g\|_{L^q([0,T])}.
\end{equation}

\end{theorem}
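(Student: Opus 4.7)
The plan is to reduce (\ref{eq:cHLS}) to the machinery of the classical Hardy--Littlewood--Sobolev inequality by first establishing an occupation-density estimate for the self-affine function $X$: there should exist a constant $C>0$ such that for every $t \in [0,1]$ and every $\epsilon > 0$,
\begin{equation*}
|\{s \in [0,1] : |X_t - X_s| < \epsilon\}| \leq C \epsilon^{1/H}.
\end{equation*}
This plays the role of an ``$H$-Hölder regularity from below'' of $X$ and is where Assumption \ref{asm:sa} enters in an essential way.

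To establish this estimate, I would fix $t$ and $\epsilon$ and choose $n \in \mathbb{N}$ such that $b^{-H(n+1)} \leq \epsilon < b^{-Hn}$. The aim is to show that the sub-level set $\{s : |X_s - X_t| < \epsilon\}$ meets only a bounded number of dyadic intervals $I_{n,j} = [jb^{-n}, (j+1)b^{-n}]$, with this number depending only on $b$, $H$ and the constant in Assumption \ref{asm:sa}. Two ingredients are combined. First, the self-affine identity (\ref{eq:selfaffine}) supplies a Hölder upper bound $|X_a - X_b| \leq M|a-b|^H$, with $M$ controlled by the uniform norm of the finite family $\mathcal{Y}^X$. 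Second, Assumption \ref{asm:sa} provides the matching lower bound $|X_{(j+1)/b^n} - X_{j/b^n}| \geq C^{-1} b^{-Hn}$ on successive dyadic increments. A telescoping argument across the intermediate dyadic intervals between $s$ and $t$ then shows that if $s$ and $t$ lie in sufficiently distant dyadic intervals of scale $n$, then $|X_t - X_s| > \epsilon$; hence the sub-level set is confined to a neighborhood of bounded dyadic width, of Lebesgue measure $\lesssim b^{-n} \lesssim \epsilon^{1/H}$.

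Once the occupation estimate is in hand, the kernel $K(s,t) := |X_t - X_s|^{-\alpha}$ satisfies, uniformly in the other variable, the weak-$L^{1/(\alpha H)}$ bound $|\{s : K(s,t) > \lambda\}| \leq C \lambda^{-1/(\alpha H)}$, and symmetrically in $t$. From this point, (\ref{eq:cHLS}) follows from the standard real-variable proof of the Hardy--Littlewood--Sobolev inequality: writing $K = K\mathbf{1}_{K\leq A} + K\mathbf{1}_{K>A}$, estimating each part via Hölder and the distribution function of $K$, and optimizing over $A$. The scaling relation $1/p + 1/q = 2 - \alpha H$ is precisely the condition that balances this optimization.

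The main obstacle is the occupation estimate itself. The Hölder upper bound and the dyadic-increment lower bound from Assumption \ref{asm:sa} must be combined delicately: at a single scale, the upper-bound constant $M$ may dominate the lower-bound constant $C^{-1}$, so the telescoping step must aggregate several consecutive dyadic intervals in order to accumulate enough oscillation to rule out contributions from far-apart dyadic intervals. Once this density estimate is available, the bilinear Hardy--Littlewood--Sobolev step is essentially classical.
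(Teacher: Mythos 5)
Your reduction correctly identifies the pivot of the problem: a uniform occupation bound $\sup_{t}\left|\{s\in[0,1]:|X_t-X_s|<\varepsilon\}\right|\lesssim \varepsilon^{1/H}$ would put the kernel $|X_t-X_s|^{-\alpha}$ uniformly in weak-$L^{1/(\alpha H)}$, and the bilinear estimate would then follow from the standard layer-cake proof of Hardy--Littlewood--Sobolev; testing \eqref{eq:cHLS} with $f=g=\mathbf{1}_{A}$, $A=\{s:|X_s-X_{t_0}|<\varepsilon\}$, even shows that an occupation bound of this strength is \emph{forced} by the stated exponent relation. The genuine gap is that this occupation estimate is not a consequence of Assumption \ref{asm:sa}, and your telescoping step is where it fails. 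Assumption \ref{asm:sa}, together with \eqref{eq:selfaffine}, bounds from below the modulus of each \emph{single} $b$-adic increment $X_{(j+1)b^{-n}}-X_{jb^{-n}}$; but increments carry signs, and nothing prevents exact cancellation across neighbouring cells, so no lower bound on $|X_{kb^{-n}}-X_{jb^{-n}}|$ for $|k-j|\ge 2$ can be extracted. In the Bertoin-type examples of Section \ref{sec:SelfMain} the generating pattern necessarily contains both up- and down-steps of the same size $a^{-1}$ (because $b>a$), so there are indices with $X_{(j+2)b^{-n}}=X_{jb^{-n}}$, and self-affinity reproduces such returns inside every cell at every finer scale. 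A mass-distribution count (for instance with $a=3$, $b=5$: at each scale, $3$ of the $5$ time sub-cells send their values into a common value interval of relative length $1/3$) produces a level $v=X_{t_0}$ and nested value windows of width $\varepsilon\simeq 3^{-m}$ whose time-occupation is at least $(3/5)^m=\varepsilon^{\log(5/3)/\log 3}$, which is far larger than $\varepsilon^{1/H}$ with $1/H=\log 5/\log 3$. So the sub-level set is \emph{not} confined to boundedly many dyadic cells of the matching scale; with only the genuine ingredients you have (the H\"older upper bound and single-cell lower bounds), the kernel can only be placed uniformly in weak-$L^{1/\alpha}$, and your scheme then yields the weaker relation $1/p+1/q=2-\alpha$, not $2-\alpha H$.

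This is also a genuinely different route from the paper's, which never attempts (and never needs) a long-range lower bound on $|X_t-X_s|$: there the bilinear form is discretized on the $b$-adic grid, the cell integrals of $|X^{(n)}_t-X^{(n)}_s|^{-\alpha}$ are computed in closed form (Lemma \ref{lem:estiota}), and \eqref{eq:selfaffine}--\eqref{eq:selfaffineAsm} are invoked only on single-cell increments before summing. Your necessity observation is nonetheless valuable and worth retaining: since the indicator test above shows that \eqref{eq:cHLS} with $1/p+1/q=2-\alpha H$ forces the $\varepsilon^{1/H}$ occupation bound, any complete argument must somehow rule out the level-set recurrences described above; your plan would become correct for monotone self-affine functions (whose inverse is $1/H$-H\"older), but for the general class $\mathcal{A}$ the key lemma of your proposal is false, and that is where the proof breaks down.
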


\begin{proof}

First, without loss of generality, we can assume that $f$ and $g$ are positive functions and, by a density argument, that they are continuous. For any $n\geq 1$ and $j,k$ such that $0\leq j,k\leq b^n$, we consider two uniform discretizations of $[0,1]$ given by
\begin{equation*}
t_{j}^{(n)} : = \frac{j}{b^n}\quad\textrm{and}\quad s_{k}^{(n)} : = \frac{k}{b^n}.
\end{equation*}
Furthermore, we consider the following approximations at the points $\left(t_k^{(n)}\right)_{0\leq k\leq b^{n}}$
\begin{enumerate}
\item $f$ and $g$ by the step functions $f^{(n)}$ and $g^{(n)}$,
\item $X$ by a linear interpolation $X^{(n)}$.
\end{enumerate}
We now introduce the following approximation of the integral on the left-hand-side of \eqref{eq:cHLS}, that is
\begin{equation}
I^{(\varepsilon,n)} = \sum_{j = 0}^{b^n-1}\sum_{k = 0}^{b^n-1} f\left(t^{(n)}_j\right) g\left(s^{(n)}_k\right) \int_{t^{(n)}_j}^{t^{(n)}_{j+1}}\int_{s^{(n)}_k}^{s^{(n)}_{k+1}}\left(|X_t^{(n)}-X_s^{(n)}|+\varepsilon\right)^{-\alpha}  ds dt.
\end{equation}
In order to obtain \eqref{eq:cHLS} from this integral, we need to prove that, up to a constant, it is bounded by the $L^p$-norm of $f^{(n)}$ and the $L^q$-norm of $g^{(n)}$. Then, we use Fatou's Lemma to let $n\to\infty$ and then the monotone convergence theorem to let $\varepsilon\to 0$.

To deduce the desired bound on $I^{(\varepsilon,n)}$, we need to estimate, for any $\ell,k\in\{0,\cdots b^n\}$, the integral
\begin{align*}
\iota_{k,\ell}^{(\varepsilon,n)} : = \int_{t_{k}^{(n)}}^{t_{k+1}^{(n)}}\int_{t_{\ell}^{(n)}}^{t_{\ell+1}^{(n)}}\left(|X_t^{(n)} - X_s^{(n)}| + \varepsilon\right)^{-\alpha} ds dt.
\end{align*}
We directly obtain that
\begin{equation*}
\iota_{k,\ell}^{(\varepsilon,n)} \leq \iota_{k,\ell}^{(0,n)} : = \iota_{k,\ell}^{(n)},
\end{equation*}
and, moreover, we have the following result.
\begin{lemma}\label{lem:estiota} Let $n\in\mathbb{N}^*$ and $k,\ell\in\{0,\cdots b^n\}$. We have
\begin{align*}
\iota_{k,\ell}^{(n)} = \frac1{(1-\alpha)(2-\alpha)}\frac{t_{\ell+1}^{(n)} - t_{\ell}^{(n)}}{X_{t^{(n)}_{\ell+1}} - X_{t^{(n)}_{\ell}}}\frac{t_{k+1}^{(n)} - t_{k}^{(n)}}{X_{t^{(n)}_{k+1}} - X_{t^{(n)}_{k}}}\varDelta_{\ell,k}^{(n)} |X_t - X_s|^{2-\alpha},
\end{align*}
where
\begin{align*}
 \varDelta_{\ell,k}^{(n)} |X_t - X_s|^{2-\alpha} : =&
 \\ &\hspace{-5em}  |X_{t^{(n)}_{k+1}} - X_{t^{(n)}_{\ell}}|^{2-\alpha} + |X_{t^{(n)}_{k}} - X_{t^{(n)}_{\ell+1}}|^{2-\alpha} - |X_{t^{(n)}_{k+1}} - X_{t^{(n)}_{\ell+1}}|^{2-\alpha} - |X_{t^{(n)}_{k}} - X_{t^{(n)}_{\ell}}|^{2-\alpha} .
\end{align*}
\end{lemma}
The proof of Lemma \ref{lem:estiota} is postponed in Section \ref{sec:estiota}.
Since $X$ is a self-affine function and thanks to \eqref{eq:selfaffine} and \eqref{eq:selfaffineAsm}, we have that there exists $Z^{(1)},Z^{(2)}\in\mathcal{Y}$ such that
\begin{align}
|X_{t^{(n)}_{k+1}} - X_{t^{(n)}_{k}}| &= b^{-Hn} |Z^{(1)}_1| \geq b^{-Hn} C^{-1},\label{ineq:integral0}\\
|X_{t^{(n)}_{\ell+1}} - X_{t^{(n)}_{\ell}}| &= b^{-Hn} |Z^{(2)}_1| \geq b^{-Hn} C^{-1},\label{ineq:integral1}
\end{align}
and, moreover, it follows from \eqref{eq:selfaffine}  that there exists $\eta \in\mathbb{N}^{N}$ with $|\eta| = k-\ell$ such that
\begin{equation*}
 X_{t^{(n)}_{k}} - X_{t^{(n)}_{\ell}} = b^{-Hn} \sum_{j = 1}^N \eta_j Y^{(j)}_1 = : b^{-Hn} \delta_{k,\ell} X.
\end{equation*}
Now, assume, for instance, that $X_{t^{(n)}_{k+1}} \geq X_{t^{(n)}_{\ell+1}} \geq X_{t^{(n)}_{\ell}} \geq X_{t^{(n)}_{k}}$ (the other cases follow from similar computations), we obtain, thanks to Taylor-Lagrange's formula,
\begin{align}
 \varDelta_{\ell,k}^{(n)} |X_t - X_s|^{2-\alpha} &\nonumber
 \\ &\hspace{-7em}=  (X_{t^{(n)}_{k+1}} - X_{t^{(n)}_{\ell+1}})^{2-\alpha} + (X_{t^{(n)}_{k}} - X_{t^{(n)}_{\ell}})^{2-\alpha} - (X_{t^{(n)}_{k+1}} - X_{t^{(n)}_{\ell}})^{2-\alpha} -(X_{t^{(n)}_{k}} - X_{t^{(n)}_{\ell+1}})^{2-\alpha}\nonumber
 \\ &\hspace{-7em}= b^{-Hn(2-\alpha)} \left( \left(\delta_{k,\ell} X + Z^{(1)}_1 - Z^{(2)}_1\right)^{2-\alpha} + \left(\delta_{k,\ell} X\right)^{2-\alpha} - \left(\delta_{k,\ell} X + Z^{(1)}_1 \right)^{2-\alpha} - \left(\delta_{k,\ell} X - Z^{(2)}_1 \right)^{2-\alpha} \right)\nonumber
 \\&\hspace{-7em}\lesssim b^{-Hn(2-\alpha)} |Z_1^{(1)}| \lesssim b^{-Hn(2-\alpha)}.\label{ineq:integral2}
\end{align}
Hence, we deduce from Lemma \ref{lem:estiota}, \eqref{ineq:integral0}, \eqref{ineq:integral1} and \eqref{ineq:integral2} that
\begin{align*}
\iota_{k,\ell}^{(n)} &\lesssim b^{-Hn(2-\alpha)}\frac{t_{\ell+1}^{(n)} - t_{\ell}^{(n)}}{|X_{t^{(n)}_{\ell+1}} - X_{t^{(n)}_{\ell}}|}\frac{t_{k+1}^{(n)} - t_{k}^{(n)}}{|X_{t^{(n)}_{k+1}} - X_{t^{(n)}_{k}}|}
\\ &\lesssim C^2 b^{Hn\alpha - 2n}.
\end{align*}
We can now proceed to estimate $I^{(\varepsilon,n)}$. Since $H\alpha - 2 = -1/p-1/q$ and thanks to Jensen's inequality, we have
\begin{align*}
I^{(\varepsilon,n)} &\lesssim C^2 \sum_{j = 0}^{b^n-1}\sum_{k = 0}^{b^n-1} f\left(t^{(n)}_j\right) g\left(s^{(n)}_k\right) b^{-n(1/p+1/q)}
\\ &\lesssim C^2 \left(\sum_{j = 0}^{b^n-1} f\left(t^{(n)}_j\right)^p b^{-n}\right)^{1/p} \left(\sum_{k = 0}^{b^n-1} g\left(s^{(n)}_k\right)^q b^{-n}\right)^{1/q} = C^2 \|f^{(n)}\|_{L^p([0,1])}\|g^{(n)}\|_{L^q([0,1])},
\end{align*}
which concludes the proof.
\end{proof}

\subsection{Proof of Theorem \ref{thm:Strichartz}}

Thanks to the previous result and by following the $TT^*$ method \cite{keel1998endpoint,cazenave2003semilinear}, we can now prove Theorem \ref{thm:Strichartz}. We easily deduce the following preliminary result thanks to the Fourier formulation of $(P_{s,t})_{0\leq s\leq t\leq 1}$ given in \eqref{eq:propagator}.

\begin{lemma}
Let $0\leq s\leq t\leq 1$. We have, $\forall \varphi\in L^2(\mathbb{R}^d)$,
\begin{equation}\label{eq:isometry}
\|P_{s,t}\varphi\|_{L^2(\mathbb{R}^d)} = \|\varphi\|_{L^2(\mathbb{R}^d)},
\end{equation}
Moreover the adjoint of $P$, denoted $P^*$, is such that, $\forall \varphi\in L^2(\mathbb{R}^d)$,
\begin{equation*}
P^*_{s,t}\varphi =  \mathcal{F}^{-1}\left( e^{-i|\xi|^2(X_t-X_s)}\hat{\varphi}(\xi) \right) = P_{t,s}\varphi,
\end{equation*}
and,
\begin{equation*}
P_{0,s}^*P_{0,t} = P_{s,t}.
\end{equation*}
\end{lemma}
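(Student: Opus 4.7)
The plan is to verify all three identities directly from the Fourier representation
\[
P_{s,t}\varphi = \mathcal{F}^{-1}\bigl(e^{i|\xi|^2(X_t-X_s)}\hat{\varphi}(\xi)\bigr),
\]
treating $P_{s,t}$ as a unimodular Fourier multiplier. Since the symbol $\xi\mapsto e^{i|\xi|^2(X_t-X_s)}$ is bounded (indeed of modulus $1$), all the manipulations extend by density from Schwartz functions to $L^2(\mathbb{R}^d)$, so I would perform the computations on $\mathcal{C}^\infty_0(\mathbb{R}^d)$ and conclude by density at the end.

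For the isometry \eqref{eq:isometry}, I would apply Plancherel's identity: $\|P_{s,t}\varphi\|_{L^2}=\|\widehat{P_{s,t}\varphi}\|_{L^2}=\|e^{i|\xi|^2(X_t-X_s)}\hat{\varphi}\|_{L^2}$, and since $|e^{i|\xi|^2(X_t-X_s)}|=1$ pointwise, this equals $\|\hat{\varphi}\|_{L^2}=\|\varphi\|_{L^2}$. For the adjoint formula, I would start from the $L^2$ pairing $\langle P_{s,t}\varphi,\psi\rangle$, pass to the Fourier side via Parseval, move the complex conjugate of the multiplier onto $\hat\psi$, and return to the physical side:
\[
\langle P_{s,t}\varphi,\psi\rangle = \bigl\langle e^{i|\xi|^2(X_t-X_s)}\hat\varphi,\hat\psi\bigr\rangle = \bigl\langle \hat\varphi,e^{-i|\xi|^2(X_t-X_s)}\hat\psi\bigr\rangle = \bigl\langle \varphi,\mathcal{F}^{-1}(e^{-i|\xi|^2(X_t-X_s)}\hat\psi)\bigr\rangle.
\]
Reading off the adjoint gives $P^*_{s,t}\psi = \mathcal{F}^{-1}(e^{-i|\xi|^2(X_t-X_s)}\hat\psi)$, and since swapping $s$ and $t$ in the original definition \eqref{eq:propagator} flips the sign of the exponent, this coincides with $P_{t,s}\psi$.

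For the composition identity, I would simply observe that the Fourier multipliers corresponding to $P_{0,s}^*$ and $P_{0,t}$ commute (both being functions of $|\xi|^2$) and multiply:
\[
P_{0,s}^*P_{0,t}\varphi = \mathcal{F}^{-1}\bigl(e^{-i|\xi|^2 X_s}e^{i|\xi|^2 X_t}\hat\varphi\bigr) = \mathcal{F}^{-1}\bigl(e^{i|\xi|^2(X_t-X_s)}\hat\varphi\bigr) = P_{s,t}\varphi.
\]
There is no genuine obstacle here: the statement is a bookkeeping lemma that translates the algebraic structure of the Fourier symbols into operator identities, and the only mild point of care is to check that $\hat\varphi\in L^1$ is not needed (the multiplier description is valid for any $L^2$ function through density from $\mathcal{C}^\infty_0$), which is why the integral formula in \eqref{eq:propagator} plays no role in this lemma.
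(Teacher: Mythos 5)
Your proof is correct and follows exactly the route the paper intends: the paper gives no written proof, merely asserting the lemma follows easily from the Fourier formulation \eqref{eq:propagator}, and the intended argument is precisely your Plancherel/unimodular-multiplier computation (using $X_0=0$ for the composition identity, which you implicitly rely on and which holds by definition of a self-affine function).
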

It follows from the formulation \eqref{eq:propagator} of $P$, in the space variables, that, $\forall \varphi\in L^1(\mathbb{R}^d)$,
\begin{equation*}
\|P_{s,t}\varphi\|_{L^{\infty}(\mathbb{R}^d)} \lesssim \frac{1}{|X_t-X_s|^{d/2}}  \|\varphi\|_{L^1(\mathbb{R}^d)}.
\end{equation*}
Hence, by Riesz-Thorin's theorem, we deduce that, $\forall p\in [2,\infty]$, $\forall \varphi\in L^{p'}(\mathbb{R}^d)$,
\begin{equation}\label{ineq:dispest}
\|P_{s,t}\varphi\|_{L^p(\mathbb{R}^d)} \lesssim \frac{1}{|X_t-X_s|^{d(1/2-1/p)}} \|\varphi\|_{L^{p'}(\mathbb{R}^d)}.
\end{equation}
where $p'$ is the Hölder conjugate of $p$. 

Let $T\in (0,1]$ and $(q,p)$ $H$-admissible. We now consider the integral, $\forall f,g\in \mathcal{C}([0,T], \mathcal{C}^{\infty}_0(\mathbb{R}^d))$,
\begin{equation*}
I(f,g) = \left| \int_0^T\int_0^T \langle P_{0,t} f(s), P_{0,s} g(t)\rangle_{L^2} ds dt \right| = \left| \int_0^T\int_0^T \langle P_{s,t} f(s), g(t)\rangle_{L^2} ds dt \right|
\end{equation*}
Thanks to Hölder's inequality, \eqref{ineq:dispest} and Theorem \ref{thm:mHLS}, we obtain the following inequality, $\forall p\in(1,\infty)$,
\begin{align*}
I(f,g) &\leq  \int_0^T\int_0^T \|P_{s,t}f(s)\|_{L^{p}(\mathbb{R}^d)}\|g(t)\|_{L^{p'}(\mathbb{R}^d)} ds dt
\\&\lesssim\int_0^T\int_0^T |X_t-X_s|^{d(1/2-1/p)}\|f(t)\|_{L^{p'}(\mathbb{R}^d)}\|g(s)\|_{L^{p'}(\mathbb{R}^d)} ds dt
\\&\lesssim \|f\|_{L^{q_1}([0,T],L^{p'}(\mathbb{R}^d))}\|g\|_{L^{q_2}([0,T],L^{p'}(\mathbb{R}^d))},
\end{align*}
where $q_1,q_2\in (1,\infty)$ are such that
\begin{equation*}
2 - dH\left(\frac12-\frac1p\right) = \frac 1{q_1} + \frac1{q_2}.
\end{equation*}
By taking $q_1 = q_2 = q'$, the previous inequality becomes
\begin{equation*}
\frac2q = d H\left(\frac12-\frac1p\right).
\end{equation*}
Thus, we obtain that
\begin{equation}\label{eq:StriHomoDual}
\left\|\int_0^T P_{0,s}^* f(s) ds \right\|_{L^2(\mathbb{R}^d)}^2 = I(f,f) \lesssim  \|f\|_{L^{q'}([0,T],L^{p'}(\mathbb{R}^d))}^2,
\end{equation}
and, by a duality argument, we deduce
\begin{equation}\label{eq:Stri2b}
\left\| \int_0^T P_{s,\cdot}f(s) ds \right\|_{L^{q}([0,T],L^p(\mathbb{R}^d))} \lesssim\|f\|_{L^{q'}([0,T],L^{p'}(\mathbb{R}^d))}.
\end{equation}
By duality, we have that
\begin{equation*}
\|P_{0,t}f\|_{L^q([0,T];L^p(\mathbb{R}^d))} = \sup_{ \|g\|_{L^{q'}([0,T];L^{p'}(\mathbb{R}^d))} = 1} \left| \int_{0}^T \langle P_{0,t}f,g(t) \rangle_{L^2}dt\right|,
\end{equation*}
and, furthermore, thanks to \eqref{eq:StriHomoDual}, $\forall f\in L^2(\mathbb{R}^d)$ and $\forall g\in L^{q'}([0,T];L^{p'}(\mathbb{R}^d))$,
\begin{align*}
\left|\int_{0}^T \langle P_{0,t}f,g(t) \rangle_{L^2} dt \right|&= \left|\left\langle f, \int_0^T P_{0,t}^*g(t)\right\rangle_{L^2}\right| \leq \|f\|_{L^2(\mathbb{R}^d)} \left\|\int_0^T P_{0,t}^*g(t) ds \right\|_{L^2(\mathbb{R}^d)}
\\ &\lesssim \|f\|_{L^2(\mathbb{R}^d)} \|g\|_{L^{q'}([0,T],L^{p'}(\mathbb{R}^d))},
\end{align*}
which gives \eqref{eq:Stri1}. In order to obtain \eqref{eq:Stri2}, we remark that, by \eqref{eq:StriHomoDual},
\begin{align*}
\left\|\int_0^T P_{s,\cdot} f(s) ds\right\|_{L^q([0,T];L^p(\mathbb{R}^d))} &\leq \int_0^T \left\|P_{s,\cdot} f(s) \right\|_{L^q([0,T];L^p(\mathbb{R}^d))} ds
\\ &\leq  \int_0^T \|f(s)\|_{L^2(\mathbb{R}^d))} ds = \|f\|_{L^1([0,T];L^2(\mathbb{R}^d))}.
\end{align*}
Inequality \eqref{eq:Stri2} follows from an interpolation argument between the previous inequality and \eqref{eq:Stri2b}.

\subsection{Proof of Lemma \ref{lem:estiota}}\label{sec:estiota}

We remark that we have
\begin{equation*}
X^{(n)}_t = X_{t^{(n)}_k} + (t -t^{(n)}_k)(X_{t^{(n)}_{k+1}} - X_{t^{(n)}_k})(t_{k+1}^{(n)} - t_{k}^{(n)})^{-1},
\end{equation*}
and, for any $s\in[t^{(n)}_{\ell},t^{(n)}_{\ell+1}]$,
\begin{equation*}
X^{(n)}_s = X_{t^{(n)}_{\ell}} + (s -t^{(n)}_{\ell})(X_{t^{(n)}_{\ell+1}} - X_{t^{(n)}_{\ell}})(t_{\ell+1}^{(n)} - t_{\ell}^{(n)})^{-1}.
\end{equation*}
We also remark that, thanks to assumption \eqref{eq:selfaffineAsm}, we have 
\begin{equation*}
X_{t^{(n)}_{m+1}} - X_{t^{(n)}_{m}} \neq 0,
\end{equation*}
for any $m$ in $\{0,\cdots,b^n\}$. 
We now decompose our proof in 3 steps which depend on the following assumptions
\begin{enumerate}
\item[1] there exists a (unique) $t^*\in]t^{(n)}_{\ell},t^{(n)}_{\ell+1}[$ such that
\begin{equation}\label{eq:tstarassm}
X_t^{(n)} - X_{t^*}^{(n)} = 0.
\end{equation}
\item[2] we have $\forall s\in [t^{(n)}_{\ell},t^*]$,
\begin{equation}\label{eq:asmSignLem}
X_t^{(n)} - X_{s}^{(n)} \geq 0.
\end{equation}
\end{enumerate}
\noindent\textbf{Step 1: Assumptions 1 and 2 are verified} 

We have, $\forall t\in[t^{(n)}_k,t^{(n)}_{k+1}]$,
\begin{align*}
\int_{t_{\ell}^{(n)}}^{t_{\ell+1}^{(n)}}|X_t^{(n)} - X_s^{(n)}|^{-\alpha} ds &= \int_{t_{\ell}^{(n)}}^{t^*}|X_t^{(n)} - X_s^{(n)}|^{-\alpha}ds + \int_{t^*}^{t_{\ell+1}^{(n)}}|X_t^{(n)} - X_s^{(n)}|^{-\alpha}ds
\\ &\hspace{-12em}=\int_{t_{\ell}^{(n)}}^{t^*}(X_t^{(n)} - X_s^{(n)})^{-\alpha}ds + \int_{t^*}^{t_{\ell+1}^{(n)}}(X_s^{(n)} - X_t^{(n)})^{-\alpha}ds
\\ &\hspace{-12em}=\frac{1}{(1-\alpha)}\frac{t_{\ell+1}^{(n)} - t_{\ell}^{(n)}}{X_{t^{(n)}_{\ell+1}} - X_{t^{(n)}_{\ell}}}\left((X_t^{(n)} - X_{t^{(n)}_{\ell}})^{1-\alpha} + (X_{t^{(n)}_{\ell+1}} - X_t^{(n)})^{1-\alpha}\right).
\end{align*}
Hence, by denoting, for any $m$ in $\{0,\cdots,b^n\}$,
\begin{equation*}
\zeta_{n,m} : = \frac{t_{m+1}^{(n)} - t_{m}^{(n)}}{X_{t^{(n)}_{m+1}} - X_{t^{(n)}_{m}}},
\end{equation*}
we obtain
\begin{align}
\iota_{k,\ell}^{(n)}=& \; \frac{\zeta_{n,\ell}}{1-\alpha} \int_{t_{k}^{(n)}}^{t_{k+1}^{(n)}}\left((X_t^{(n)} - X_{t^{(n)}_{\ell}})^{1-\alpha} + (X_{t^{(n)}_{\ell+1}} - X_t^{(n)})^{1-\alpha}\right)dt\nonumber
\\ =& \;\frac{\zeta_{n,\ell}}{1-\alpha}\frac{\zeta_{n,k}}{2-\alpha}\left((X_{t^{(n)}_{k+1}} - X_{t^{(n)}_{\ell}})^{2-\alpha} - (X_{t^{(n)}_{k}} - X_{t^{(n)}_{\ell}})^{2-\alpha} \right)\nonumber
\\ &+ \;\frac{\zeta_{n,\ell}}{1-\alpha}\frac{\zeta_{n,k}}{2-\alpha}\left(-(X_{t^{(n)}_{\ell+1}} - X_{t^{(n)}_{k+1}})^{2-\alpha} + (X_{t^{(n)}_{\ell + 1}} - X_{t^{(n)}_{k}})^{2-\alpha} \right).\label{eq:LmRes1}
\end{align}
\noindent\textbf{Step 2: Assumption 1 is verified and 2 is not}

If we  assume the opposite inequality in \eqref{eq:asmSignLem}, we obtain that
\begin{align*}
\int_{t_{\ell}^{(n)}}^{t_{\ell+1}^{(n)}}|X_t^{(n)} - X_s^{(n)}|^{-\alpha} ds
\\ &\hspace{-12em}=\frac{1}{(1-\alpha)}\frac{t_{\ell+1}^{(n)} - t_{\ell}^{(n)}}{X_{t^{(n)}_{\ell+1}} - X_{t^{(n)}_{\ell}}}\left(-(X_{t^{(n)}_{\ell}}- X_t^{(n)})^{1-\alpha} - ( X_t^{(n)} - X_{t^{(n)}_{\ell+1}})^{1-\alpha}\right),
\end{align*}
which leads to
\begin{align}
\iota_{k,\ell}^{(n)}=& \; - \frac{\zeta_{n,\ell}}{1-\alpha} \int_{t_{k}^{(n)}}^{t_{k+1}^{(n)}}\left((X_{t^{(n)}_{\ell}}-X_t^{(n)})^{1-\alpha} + (X_t^{(n)} - X_{t^{(n)}_{\ell+1}})^{1-\alpha}\right)dt\nonumber
\\ =& \;\frac{\zeta_{n,\ell}}{1-\alpha}\frac{\zeta_{n,k}}{2-\alpha}\left((X_{t^{(n)}_{\ell}}- X_{t^{(n)}_{k+1}})^{2-\alpha} - (X_{t^{(n)}_{k}} - X_{t^{(n)}_{\ell}})^{2-\alpha} \right)\nonumber
\\ &+ \;\frac{\zeta_{n,\ell}}{1-\alpha}\frac{\zeta_{n,k}}{2-\alpha}\left(-(X_{t^{(n)}_{k+1}} - X_{t^{(n)}_{\ell+1}})^{2-\alpha} + (X_{t^{(n)}_{k}} - X_{t^{(n)}_{\ell + 1}})^{2-\alpha} \right).\label{eq:LmRes2}
\end{align}
\noindent\textbf{Step 3: Assumption 1 is not verified}

Finally, if we assume that there does not exists a $t^*\in]t^{(n)}_{\ell},t^{(n)}_{\ell+1}[$ such that \eqref{eq:tstarassm} holds, we compute directly, if $X^{(n)}_s \geq X^{(n)}_t$ for all $(t,s) \in [t^{(n)}_k,t^{(n)}_{k+1}] \times[t^{(n)}_{\ell},t^{(n)}_{\ell+1}]$,
\begin{align}
\iota_{k,\ell}^{(n)}=& \;\frac{\zeta_{n,\ell}}{1-\alpha}\frac{\zeta_{n,k}}{2-\alpha}\left(-(X_{t^{(n)}_{\ell+1}} - X_{t^{(n)}_{k+1}})^{2-\alpha} + (X_{t^{(n)}_{\ell + 1}} - X_{t^{(n)}_{k}})^{2-\alpha} \right)\nonumber
\\ &+ \;\frac{\zeta_{n,\ell}}{1-\alpha}\frac{\zeta_{n,k}}{2-\alpha}\left((X_{t^{(n)}_{\ell}} - X_{t^{(n)}_{k+1}})^{2-\alpha} - (X_{t^{(n)}_{\ell}} - X_{t^{(n)}_{k}})^{2-\alpha} \right).\label{eq:LmRes3}
\end{align}
and, if $X^{(n)}_s \leq X^{(n)}_t$ for all $(t,s) \in [t^{(n)}_k,t^{(n)}_{k+1}] \times[t^{(n)}_{\ell},t^{(n)}_{\ell+1}]$,
\begin{align}
\iota_{k,\ell}^{(n)}=& \;\frac{\zeta_{n,\ell}}{1-\alpha}\frac{\zeta_{n,k}}{2-\alpha}\left(-(X_{t^{(n)}_{k+1}} - X_{t^{(n)}_{\ell+1}})^{2-\alpha} + (X_{t^{(n)}_{k}} - X_{t^{(n)}_{\ell + 1}})^{2-\alpha} \right)\nonumber
\\ &+ \;\frac{\zeta_{n,\ell}}{1-\alpha}\frac{\zeta_{n,k}}{2-\alpha}\left((X_{t^{(n)}_{k+1}} - X_{t^{(n)}_{\ell}})^{2-\alpha} - (X_{t^{(n)}_{k}} - X_{t^{(n)}_{\ell}})^{2-\alpha} \right).\label{eq:LmRes4}
\end{align}
Thus, the desired result follows from \eqref{eq:LmRes1}, \eqref{eq:LmRes2}, \eqref{eq:LmRes3} and \eqref{eq:LmRes4}.

\section{The Cauchy problem}\label{sec:Cauchy}

With the dispersive estimates from Theorem \ref{thm:Strichartz} at hand, we are in position to solve the Cauchy problem of Equation \eqref{eq:MainMild}. The arguments that we use are standard and are based on a fixed-point strategy (see \cite{kato1987nonlinear,tsutsumi1987l2,cazenave2003semilinear}). 

Let $\psi_0\in L^2(\mathbb{R}^d)$, $\sigma < 2/dH$, $X\in\mathcal{A}$, $r\in\mathbb{R}$ such that $(r,2\sigma+2)$ is $H$-admissible and $T>0$. We consider the mapping $\Gamma$ given by
\begin{equation*}
\Gamma(\psi)(t,x) = P_{0,t}\psi_0(x) - i \lambda \int_0^{t} P_{s,t}|\psi|^{2\sigma}\psi(s,x)ds,\quad\forall (t,x)\in[0,T]\times\mathbb{R}^d.
\end{equation*}
Our goal is to prove that the mapping $\Gamma$ is a contraction in a closed subspace of $L^{r}([0,T];L^{2\sigma+2}(\mathbb{R}^d))$ in order to apply Banach's fixed-point theorem. The existence and uniqueness of a fixed point in $L^{r}([0,T];L^{2\sigma+2}(\mathbb{R}^d))$ will then solve the Cauchy problem of Equation \eqref{eq:MainMild}. The next proposition provides the necessary results to apply Banach's fixed-point theorem.

\begin{prop}
Denote $\mathcal{B}_{R,T}$ the closed ball of radius $R>0$ in $L^{r}([0,T];L^{2\sigma+2}(\mathbb{R}^d))$. There exists $T>0$ and $R>0$ such that 
\begin{enumerate}
\item $\Gamma$ is a contraction on $\mathcal{B}_{R,T}$,
\item $\Gamma(\mathcal{B}_{R,T})\subset \mathcal{B}_{R,T}$.
\end{enumerate}
\end{prop}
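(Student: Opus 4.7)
The plan is to apply Theorem~\ref{thm:Strichartz} with the single $H$-admissible pair $(r,2\sigma+2)$ on both sides of \eqref{eq:Stri2}, and then to absorb the resulting time norm of the nonlinearity by H\"older in time, gaining a strictly positive power of $T$. Writing $\|\cdot\|_{a,p}$ as shorthand for $\|\cdot\|_{L^a([0,T];L^p(\mathbb{R}^d))}$, estimates \eqref{eq:Stri1} and \eqref{eq:Stri2} immediately give
\begin{equation*}
\|\Gamma(\psi)\|_{r,2\sigma+2} \leq C_1 \|\psi_0\|_{L^2(\mathbb{R}^d)} + C_2 |\lambda|\, \bigl\||\psi|^{2\sigma}\psi\bigr\|_{r',(2\sigma+2)'}.
\end{equation*}
The identity $(2\sigma+1)(2\sigma+2)' = 2\sigma+2$ reduces the spatial norm of the nonlinearity to $\|\psi\|_{L^{2\sigma+2}}^{2\sigma+1}$, so the second term above equals $\|\psi\|_{(2\sigma+1)r',\,2\sigma+2}^{2\sigma+1}$; H\"older in time then dominates it by $T^{\theta}\,\|\psi\|_{r,2\sigma+2}^{2\sigma+1}$ with $\theta := 1 - (2\sigma+2)/r$.

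The key observation is that $\theta$ is strictly positive. Indeed, the $H$-admissibility identity $2/r = dH\sigma/(2\sigma+2)$ rewrites as $(2\sigma+2)/r = dH\sigma/2$, so $\theta = 1 - dH\sigma/2$, which is positive precisely under the standing hypothesis $\sigma < 2/(dH)$. This is the only point at which the critical exponent enters, and it encodes the regularization phenomenon described after Theorem~\ref{thm:Cauchy}: the self-affine exponent $H$ shifts the classical subcritical threshold $2/d$ to $2/(dH)$.

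For the contraction estimate, I would combine the pointwise bound $\bigl||a|^{2\sigma}a - |b|^{2\sigma}b\bigr| \lesssim (|a|^{2\sigma}+|b|^{2\sigma})|a-b|$ with H\"older in space at exponents $(2\sigma+2)/(2\sigma)$ and $2\sigma+2$, then repeat the same H\"older-in-time step, to obtain
\begin{equation*}
\|\Gamma(\psi_1)-\Gamma(\psi_2)\|_{r,2\sigma+2} \leq C\, T^{\theta}\bigl(\|\psi_1\|_{r,2\sigma+2}^{2\sigma} + \|\psi_2\|_{r,2\sigma+2}^{2\sigma}\bigr)\,\|\psi_1-\psi_2\|_{r,2\sigma+2}.
\end{equation*}
To conclude, set $R := 2C_1\|\psi_0\|_{L^2(\mathbb{R}^d)}$ and pick $T\in(0,1]$ small enough that $C_2|\lambda|\,T^{\theta} R^{2\sigma+1} \leq R/2$ and $2CT^{\theta}R^{2\sigma} < 1$; the first ensures stability of $\mathcal{B}_{R,T}$ under $\Gamma$, the second a strict contraction. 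No genuine obstacle is expected here: once the Strichartz bounds \eqref{eq:Stri1}--\eqref{eq:Stri2} are in hand, the argument is a standard Banach fixed point, and the entire effect of the self-affine modulation is condensed into the inequality $\theta > 0$.
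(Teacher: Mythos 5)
Your argument is correct and follows essentially the same route as the paper: Strichartz estimates \eqref{eq:Stri1}--\eqref{eq:Stri2} with the single $H$-admissible pair $(r,2\sigma+2)$, H\"older in space to reduce the nonlinearity to $L^{\frac{2\sigma+2}{2\sigma+1}}$, H\"older in time yielding the factor $T^{1-(2\sigma+2)/r}$ whose exponent is positive exactly because $\sigma<2/(dH)$, and then the choice $R=2C_1\|\psi_0\|_{L^2}$ with $T$ small. The only cosmetic difference is that the paper imposes the single smallness condition \eqref{eq:setT}, which implies both of your two conditions at once.
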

\begin{proof}

\noindent\textbf{First point:} We have, by using Theorem \ref{thm:Strichartz} and Hölder's inequality, $\forall \psi_1,\psi_2\in\mathcal{B}$,
\begin{align*}
\|\Gamma(\psi_1) - \Gamma(\psi_2)\|_{L^{r}([0,T];L^{2\sigma+2}(\mathbb{R}^d))} &\leq C_2|\lambda| \| |\psi_1|^{2\sigma}\psi_1 - |\psi_2|^{2\sigma}\psi_2\|_{L^{r'}([0,T];L^{\frac{2\sigma+2}{2\sigma+1}}(\mathbb{R}^d))}
\\ &\leq C_2|\lambda|(\| \psi_1\|_{L^{r'2\sigma}([0,T];L^{2\sigma+2}(\mathbb{R}^d))}^{2\sigma} + \| \psi_2\|_{L^{r'2\sigma}([0,T];L^{2\sigma+2}(\mathbb{R}^d))}^{2\sigma})
\\ & \hspace{13em}\times\|\psi_1-\psi_2\|_{L^{r'}([0,T];L^{2\sigma+2}(\mathbb{R}^d))}
\\ &\leq 2 C_2|\lambda| \|1\|_{L^{\gamma_1}([0,T])} \|1\|_{L^{\gamma_2}([0,T])}^{2\sigma}R^{2\sigma}\|\psi_1-\psi_2\|_{L^{r}([0,T];L^{2\sigma+2}(\mathbb{R}^d))}
\\ &\leq 2 C_2|\lambda| T^{1-(2\sigma+2)/r}R^{2\sigma}\|\psi_1-\psi_2\|_{L^{r}([0,T];L^{2\sigma+2}(\mathbb{R}^d))},
\end{align*}
where $\gamma_1,\gamma_2\in\mathbb{R}^+$ are such that
\begin{equation*}
\frac1{\gamma_1} + \frac{2\sigma}{\gamma_2} = 1 - \frac{2\sigma+2}{r}.
\end{equation*}
Since $(r,2\sigma+2)$ is $H$-admissible and $\sigma < 2/dH$, we deduce that
\begin{equation*}
\frac{2\sigma+2}{r} = \frac{dH}2 \sigma <1,
\end{equation*}
and, hence, $1-(2\sigma+2)/r>0$. Thus, by setting $T>0$ small enough to ensure that
\begin{equation}\label{eq:setT}
2 C_2|\lambda| T^{1-(2\sigma+2)/r}R^{2\sigma}< 1,
\end{equation}
this leads to the fact that $\Gamma$ is a contractive mapping.

\noindent\textbf{Second point:} We obtain, thanks to Theorem \ref{thm:Strichartz} and Hölder's inequality, $\forall \psi\in\mathcal{B}$,
\begin{align*}
\|\Gamma(\psi)\|_{L^{r}([0,T];L^{2\sigma+2}(\mathbb{R}^d))} &\leq C_1\|\psi_0\|_{L^2(\mathbb{R}^d)} + C_2|\lambda| \| |\psi|^{2\sigma+1} \|_{L^{r'}([0,T];L^{\frac{2\sigma+2}{2\sigma+1}}(\mathbb{R}^d))}
\\ &\leq C_1\|\psi_0\|_{L^2(\mathbb{R}^d)}  +  C_2|\lambda| \|\psi \|_{L^{r'(2\sigma+1)}([0,T];L^{2\sigma+2}(\mathbb{R}^d))}^{2\sigma+1}
\\&\leq C_1\|\psi_0\|_{L^2(\mathbb{R}^d)} +  C_2|\lambda|  \|1\|_{L^{\gamma_3}([0,T])}^{2\sigma+1}\|\psi \|_{L^{r}([0,T];L^{2\sigma+2}(\mathbb{R}^d))}^{2\sigma+1}
\\ &\leq C_1\|\psi_0\|_{L^2(\mathbb{R}^d)} +  C_2|\lambda|  T^{1 - \frac{2\sigma+2}r}R^{2\sigma+1},
\end{align*}
where $\gamma_3\in\mathbb{R}^+$ is such that
\begin{equation*}
\frac{2\sigma+1}{\gamma_3} = 1 - \frac{2\sigma+2}r.
\end{equation*}
Inequality \eqref{eq:setT} then leads to
\begin{equation*}
\|\Gamma(\psi)\|_{L^{r}([0,T];L^{2\sigma+2}(\mathbb{R}^d))} \leq C_1\|\psi_0\|_{L^2(\mathbb{R}^d)} + \frac12 R,
\end{equation*}
and, thus, by choosing
\begin{equation*}
R = 2C_1\|\psi_0\|_{L^2(\mathbb{R}^d)},
\end{equation*}
we obtain that $\mathcal{B}$ is stable by $\Gamma$.
\end{proof}
\bibliographystyle{plain}
\bibliography{Bibliography.bib}

\end{document}